\newtheorem{theorem}{Theorem}[section]
\newtheorem{proposition}[theorem]{Proposition}
\newtheorem{corollary}[theorem]{Corollary}
\newtheoremstyle{notauto}{}{}{\itshape}{}{\bfseries}{.}{0.5em}{\thmnote{#3}}
\theoremstyle{notauto}
\theoremstyle{definition}
\theoremstyle{remark}
\begin{document}
\title{Frobenius action on Carter subgroups}
\author{G\"{u}l\.{I}n Ercan$^{*}$}
\address{G\"{u}l\.{I}n Ercan, Department of Mathematics, Middle East echnical
University, Ankara, Turkey}
\email{ercan@metu.edu.tr}
\author{\.{I}sma\.{I}l \c{S}. G\"{u}lo\u{g}lu}
\address{\.{I}sma\.{I}l \c{S}. G\"{u}lo\u{g}lu, Department of Mathematics,
Do\u{g}u\c{s} University, Istanbul, Turkey}
\email{iguloglu@dogus.edu.tr}
\thanks{$^{*}$Corresponding author}
\subjclass[2000]{20D10, 20D15, 20D45}
\keywords{Frobenius group, automorphism, Carter subgroup, Fitting height}
\maketitle

\begin{abstract}
Let $G$ be a finite solvable group and $H$ be a subgroup of $Aut(G)$. Suppose
that there exists an $H$-invariant Carter subgroup $F$ of $G$ such that the
semidirect product $FH$ is a Frobenius group with kernel $F$. We prove that
the terms of the Fitting series of $C_{G}(H)$ are obtained as the intersection
of $C_{G}(H)$ with the corresponding terms of the Fitting series of $G$, and
the Fitting height of $G$ may exceed the Fitting height of $C_{G}(H)$ by at
most one. As a corollary it is shown that for any set of primes $\pi$, the
terms of the $\pi$-series of $C_{G}(H)$ is obtained as the intersection of
$C_{G}(H)$ with the corresponding terms of the $\pi$-series of $G$, and the
$\pi$-length of $G$ may exceed the $\pi$-length of $C_{G}(H)$ by at most one.
They generalize the main results of \cite{Khu}.

\end{abstract}

\section{introduction}

Let $G$ and $A$ be finite groups such that $A$ acts on $G$ by automorphisms.
Both the structure of $A$ and the way it acts on $G$ has drastic consequences
on the structure of $G.$ As a typical result in this framework one can mention
the pioneering work of J.G.Thompson which says that a group $G$ having an
automorphism of prime order fixing no elements of $G$ except the identity, is nilpotent.

The present work is motivated by our research on possible generalizations of a
result due to Khukhro \cite{Khu} showing that some important group theoretic
invariants of a solvable group $G$ admitting a Frobenius group $A=FH$ of
automorphisms with Frobenius kernel $F$ and complement $H$ are closely related
to the corresponding invariants of the fixed point subgroup $C_{G}(H)$ if $F$
acts fixed-point-freely on $G.$ Under these conditions $F$ is a Carter
subgroup of the semidirect product $GF$ and $H$ acts on the solvable group
$GF$ leaving $F$ invariant and acts, not only fixed-point-freely, but also
Frobeniusly on $F$. Here we prove that almost the same result is true if $G$
is a solvable group, $H$ a group acting on $G$ and leaving a Carter subgroup
$F$ of $G$ invariant and acting Frobeniusly on it. Namely, the main results of
this paper are the following theorem on the Fitting series and its corollary
on $\pi$-series.

\begin{theorem}
Let $G$ be a finite solvable group and $H$ be a subgroup of $Aut(G)$. Suppose
that there exists an $H$-invariant Carter subgroup $F$ of $G$ such that the
semidirect product $FH$ is a Frobenius group with kernel $F$. Then we
have\newline

$(a)$ $F_{n}(C_{G}(H))=F_{n}(G)\cap C_{G}(H)$ for all $n\in\mathbb{N}$;

$(b)$ $h(G)\leq h(C_{G}(H))+1$. In fact $G=FF_{m}(G)$ where $m=h(C_{G}(H))$.
\end{theorem}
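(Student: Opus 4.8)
The plan is to induct on $|G|$, stripping the Fitting subgroup $N=F(G)$ from below while tracking fixed points. The base case is $G$ nilpotent, where the Carter subgroup is $F=G$, so $C_G(H)=C_F(H)=1$, $m=h(C_G(H))=0$, and both parts hold trivially ($G=F=FF_0(G)$, $h(G)=1$). For the inductive step I pass to $\bar G=G/N$. A standard property of Carter subgroups makes $\bar F=FN/N$ a Carter subgroup of $\bar G$, and --- crucially --- since $FH$ is Frobenius its kernel and complement have coprime orders, so $H$ acts coprimely on the nilpotent group $F$; hence $C_{\bar F}(\bar H)=\overline{C_F(H)}=1$ and $\bar F\bar H$ is again a Frobenius group. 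Thus the inductive hypothesis applies verbatim to $\bar G$.

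To transport the conclusion back I use two elementary facts: with $N=F(G)$ one has $F_{i+1}(G)/N=F_i(\bar G)$, whence $h(G)=h(\bar G)+1$ when $G$ is not nilpotent; and, writing $h=h(G)$, the image of $F$ in the nilpotent group $G/F_{h-1}(G)$ is its unique (self-normalizing) Carter subgroup, namely the whole group, so $G=FF_{h-1}(G)$. It therefore suffices to prove $h(G)\le m+1$, and everything hinges on comparing $\bar m:=h(C_{\bar G}(\bar H))$ with $m$. Granting the fixed-point lemmas stated below, $C_{\bar G}(\bar H)=\overline{C_G(H)}=C_G(H)N/N$ and $F(C_G(H))\le F(G)$; since then $F(C_G(H))\le C_G(H)\cap N$, the group $C_{\bar G}(\bar H)$ is a quotient of $C_G(H)/F(C_G(H))$ and so (for $C_G(H)\ne1$) has $\bar m\le m-1$. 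Feeding this into the inductive bound $h(G)=h(\bar G)+1\le \bar m+2$ gives $h(G)\le m+1$, while lifting the inductive factorisation $\bar G=\bar F\,F_{\bar m}(\bar G)$ and absorbing $N\le F_m(G)$ yields $G=FF_m(G)$. Part (a) then follows from the same dictionary $F_n(G)/N=F_{n-1}(\bar G)$, anchored at $n=1$ by $F(C_G(H))=C_G(H)\cap F(G)$; the single extra Fitting level in (b) is precisely the contribution of the top Carter factor $G/F_{h-1}(G)$, onto which $F$ surjects. The residual case $C_G(H)=1$ is not covered by this count and must be disposed of separately by the non-degeneracy statement below, which returns us to the nilpotent base case.

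The whole argument thus rests on fixed-point behaviour in quotients, and this is the main obstacle: I must show $C_{G/N}(H)=C_G(H)N/N$ and $F(C_G(H))=C_{F(G)}(H)$, together with the non-degeneracy that $C_G(H)=1$ forces $G$ nilpotent. None of these is the usual coprime-action lemma, since $H$ need not act coprimely on $G$; we control only $\gcd(|F|,|H|)=1$. The route I would take is to refine $N$ through an $FH$-invariant chief series and settle one chief factor $V$ at a time, exploiting a dichotomy that the Frobenius structure forces on a $p$-chief factor: either $p\nmid|H|$, so $H$ acts coprimely on $V$ and the classical lemma applies, or $p\mid|H|$, in which case $p\nmid|F|$ and instead $F$ acts coprimely on $V$. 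In the second, genuinely new, case I would split $V=C_V(F)\oplus[V,F]$ as an $F$-module and invoke the fixed-point theory of Frobenius groups of automorphisms (the coprime results of Khukhro \cite{Khu} and their module antecedents) on the summand $[V,F]$, where $F$ acts without nonzero fixed points; the complementary summand $C_V(F)$ is pinned down by the self-normalizing property $N_G(F)=F$, which forces $C_G(F)\le F$. Establishing this chief-factor lemma --- effectively transferring the coprime Frobenius results across the fixed-point-free part of the action while accounting, through the Carter condition, for the at most one extra Fitting layer --- is the crux; once it is in hand the induction above closes $(a)$ and $(b)$ simultaneously.
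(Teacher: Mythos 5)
Your inductive scaffolding is essentially the paper's: pass to $\bar G=G/F(G)$, observe that $\bar F\bar H$ is again a Frobenius group (using $(|F|,|H|)=1$ so that $H$ acts coprimely on $F$), transport the Fitting series through $F_{i+1}(G)/F(G)=F_i(\bar G)$, and obtain $G=FF_m(G)$ from the fact that the Carter subgroup of the nilpotent group $G/F_m(G)$ is the whole group. Likewise your route to $C_{G/N}(H)=C_G(H)N/N$ --- a chief-factor dichotomy in which either $p\nmid|H|$ and the coprime lemma applies, or $C_V(F)\neq 1$ together with $C_G(F)\le N_G(F)=F$ forces $p\mid|F|$ and hence $p\nmid|H|$ --- is the paper's Proposition 3.1 (the fixed-point-free case being Khukhro's Theorem 1.5). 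So the frame is sound.

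The genuine gap is at the point you yourself flag as the crux: the inclusion $F(C_G(H))\le F(G)$, which anchors part (a) at $n=1$ and is what makes $C_{\bar G}(H)$ a quotient of $C_G(H)/F(C_G(H))$ in your count for part (b). This is \emph{not} a fixed-point-transfer statement about $H$ or $F$ acting on a chief factor. What must be shown is that for a $q$-subgroup $Q$ of $F_2(G)$ lying above $F(G)$ and $V=O_p(G)/\Phi(O_p(G))$, one has $Ker(C_{Q}(H)\,{on}\,C_{V}(H))=Ker(C_{Q}(H)\,{on}\,V)$; i.e., one must exclude an element of $O_q(C_G(H))$ that centralizes $C_V(H)$ (hence is ``Fitting'' inside $C_G(H)$) but acts nontrivially on $V$. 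Your proposed device --- split $V=C_V(F)\oplus[V,F]$ as an $F$-module and quote the coprime Frobenius fixed-point theory on $[V,F]$ --- does not reach this, because the relevant action is that of $Q$, not of $F$ or $H$, and nothing in that decomposition controls how an $H$-fixed element of $Q$ moves vectors outside $C_V(H)$. The paper's entire Section~2 exists for exactly this step: Clifford theory for $Q\trianglelefteq GH$, transitivity of $F$ on the $Q$-homogeneous components, regularity of the nontrivial $H$-orbits on them, the vanishing $C_{Z(Q)}(F)=1$, and a closing commutator computation with the nilpotency class of $Q$. That this cannot be waved through is shown by the example recalled in the paper's introduction: for a Frobenius-\emph{like} group $FH$ with $F$ a Carter subgroup one can have $F(C_{G}(H))\ne F(G)\cap C_{G}(H)$, so the inclusion genuinely depends on the full Frobenius hypothesis and requires the argument you have deferred. (The residual case $C_G(H)=1$ you set aside is fine: it follows from Khukhro's Lemma~1.3 that $[G,F]=1$, whence $G=N_G(F)=F$.)
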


\begin{corollary}
Let $G$ be a finite solvable group and $H$ be a subgroup of $Aut(G)$. Suppose
that there exists an $H$-invariant Carter subgroup $F$ of $G$ such that the
semidirect product $FH$ is a Frobenius group with kernel $F$. Then we
have\newline

$(a)$ $O_{\pi}(C_{G}(H))=O_{\pi}(G)\cap C_{G}(H)$ for any set of primes $\pi$;

$(b)$ $O_{\pi_{1},\pi_{2},\dots,\pi_{k}}(C_{G}(H))=O_{\pi_{1},\pi_{2}%
,\dots,\pi_{k}}(G)\cap C_{G}(H)$ for any sets of primes\newline$\pi_{1}%
,\pi_{2},\dots,\pi_{k}$;

$(c)$ $\ell_{\pi}(G)\leq\ell_{\pi}(C_{G}(H))+1$.
\end{corollary}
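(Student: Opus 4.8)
The plan is to derive all three parts from the Theorem, handling (a) as a self-contained base case, (b) by induction on the number of prime sets, and (c) from (b) together with part (b) of the Theorem. For part (a), I would prove both inclusions directly. Since $O_\pi(G)$ is characteristic in $G$, the subgroup $O_\pi(G)\cap C_G(H)$ is a normal $\pi$-subgroup of $C_G(H)$ and hence is contained in $O_\pi(C_G(H))$. For the reverse inclusion I would invoke part (a) of the Theorem with $n=1$: $O_\pi(C_G(H))\le F(C_G(H))=F_1(G)\cap C_G(H)\le F(G)$. Thus $O_\pi(C_G(H))$ is a $\pi$-subgroup of the nilpotent group $F(G)$, so it lies in the characteristic Hall $\pi$-subgroup $O_\pi(F(G))$, which is a normal $\pi$-subgroup of $G$ and therefore sits inside $O_\pi(G)$. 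This gives $O_\pi(C_G(H))\le O_\pi(G)\cap C_G(H)$, hence equality.

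For part (b) I would induct on $k$, the case $k=1$ being part (a). Writing $N=O_{\pi_1}(G)$, which is characteristic and hence $H$-invariant, I would first check that the hypotheses descend to $\bar G=G/N$: the image $\bar F=FN/N$ is again a Carter subgroup of $\bar G$, it is $H$-invariant, and $\bar F\bar H$ is again Frobenius with kernel $\bar F$. The last point uses that $\gcd(|F|,|H|)=1$, forced by the Frobenius hypothesis on $FH$, so that $H$ acts coprimely on $F$ and $C_{\bar F}(h)=C_F(h)N/N=1$ for every $1\ne h\in H$ by the coprime fixed-point formula. Granting the fixed-point transfer $C_{\bar G}(H)=C_G(H)N/N$ and using $O_{\pi_1}(C_G(H))=N\cap C_G(H)$ from part (a), the quotient $C_G(H)/O_{\pi_1}(C_G(H))$ is identified with $C_{\bar G}(H)$. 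Applying the inductive hypothesis to the $(k-1)$-sequence $\pi_2,\dots,\pi_k$ in $\bar G$, since $O_{\pi_1,\dots,\pi_k}(G)/N=O_{\pi_2,\dots,\pi_k}(\bar G)$, and pulling the resulting equality back along $G\to\bar G$ yields $O_{\pi_1,\dots,\pi_k}(C_G(H))=O_{\pi_1,\dots,\pi_k}(G)\cap C_G(H)$.

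For part (c), set $\ell=\ell_\pi(C_G(H))$ and let $T=O_{\pi,\pi',\pi,\dots}(G)$ be the term of the upper $\pi$-series of $G$ having exactly $\ell$ layers of type $\pi$. By part (b), every term of the upper $\pi$-series of $C_G(H)$ is the intersection of $C_G(H)$ with the corresponding term of the upper $\pi$-series of $G$; since $C_G(H)$ has $\pi$-length $\ell$, its top term is $C_G(H)$ itself, forcing $C_G(H)\le T$. Passing to $\bar G=G/T$, the fixed-point transfer gives $C_{\bar G}(H)=C_G(H)T/T=1$, and applying part (b) of the Theorem to $\bar G$ (whose hypotheses descend as above) yields $h(\bar G)\le h(1)+1=1$, so $\bar G$ is nilpotent and $\ell_\pi(\bar G)\le 1$. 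As $\ell_\pi(T)\le\ell$, subadditivity of $\pi$-length along the normal subgroup $T$ gives $\ell_\pi(G)\le\ell_\pi(T)+\ell_\pi(\bar G)\le\ell+1$.

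The main obstacle, recurring in both (b) and (c), is the fixed-point transfer $C_{G/N}(H)=C_G(H)N/N$ for $H$-invariant $N$, since the action of $H$ on $G$ need not be coprime; this is precisely the point at which the Frobenius structure of $FH$ must be exploited, and I would isolate it as a preliminary lemma (or quote the corresponding statement established while proving the Theorem). The descent of the Carter-plus-Frobenius hypotheses to quotients is routine once the coprimality of $H$ on $F$ is noted, but it must be re-verified at each inductive step, and I regard establishing the transfer lemma under only the Frobenius hypothesis as the crux of the argument.
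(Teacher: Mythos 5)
Your parts (b) and (c) are essentially sound (and in (c) you replace the paper's appeal to Lemma~1.3 of \cite{Khu} by an application of Theorem~1.1(b) to $G/T$ together with subadditivity of $\pi$-length, which is a clean, self-contained variant), and you correctly isolate the transfer $C_{G/N}(H)=C_G(H)N/N$ as the key lemma -- this is exactly Proposition~3.1 of the paper. But part (a), on which everything else rests, contains a genuine error: the very first step of your reverse inclusion asserts $O_\pi(C_G(H))\le F(C_G(H))$, and this is false for a set of primes $\pi$ with more than one element. For a single prime $p$ one indeed has $O_p(K)\le F(K)$, but $O_\pi(K)$ is merely the largest normal $\pi$-subgroup and need not be nilpotent: for $K=S_3$ and $\pi=\{2,3\}$ one has $O_\pi(K)=K\not\le F(K)=A_3$. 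In the present setting $C_G(H)$ is in general far from nilpotent (its Fitting height can be arbitrarily large), so $O_\pi(C_G(H))$ cannot be pushed into $F(G)$ in one step, and your argument that it lies in the Hall $\pi$-subgroup of $F(G)$ collapses.

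The repair is the route the paper takes: argue by induction on $|G|$. If $O_\pi(C_G(H))\ne 1$, it has a nontrivial $O_p$ for some single prime $p\in\pi$, and $O_p(C_G(H))\le F(C_G(H))=F(G)\cap C_G(H)$ \emph{does} follow from Theorem~1.1(a); hence $O_p(G)\ne 1$. Now pass to $\overline{G}=G/O_p(G)$, use the transfer lemma $C_{\overline{G}}(H)=\overline{C_G(H)}$ and the inductive hypothesis to place $O_\pi(\overline{C_G(H)})$ inside $O_\pi(\overline{G})$, and observe that the full preimage of $O_\pi(\overline{G})$ in $G$ is a normal $\pi$-subgroup (since $p\in\pi$), hence lies in $O_\pi(G)$. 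With part (a) so corrected, your inductive scheme for (b) and your argument for (c) go through.
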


As we have already pointed out the following result due to Khukhro follows as
a consequence. It should be noted that by \cite{Be} the condition $C_{G}(F)=1$
below directly implies the solvability of the group $G.$
\begin{corollary}

\cite{Khu} Let $G$ be a finite group
admitting a Frobenius group $FH$ of automorphisms with kernel $F$ and
complement $H$ such that $C_{G}(F)=1.$ Then we have\newline

$(a)$ $F_{n}(C_{G}(H))=F_{n}(G)\cap C_{G}(H)$ for all $n\in\mathbb{N}$;

$(b)$ $h(G)=h(C_{G}(H))$;

$(c)$ $O_{\pi}(C_{G}(H))=O_{\pi}(G)\cap C_{G}(H)$ for any set of primes $\pi$;

$(d)$ $O_{\pi_{1},\pi_{2},\dots,\pi_{k}}(C_{G}(H))=O_{\pi_{1},\pi_{2}%
,\dots,\pi_{k}}(G)\cap C_{G}(H)$ for any sets of primes\newline$\pi_{1}%
,\pi_{2},\dots,\pi_{k}$;

$(e)$ $\ell_{\pi}(G)=\ell_{\pi}(C_{G}(H))$.
\end{corollary}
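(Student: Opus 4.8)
The plan is to deduce this corollary from Theorem 1.1 and Corollary 1.2 by passing to the semidirect product $\tilde G = GF$, inside which $F$ becomes an internal Carter subgroup. First I would record that $\tilde G$ is solvable: $G$ is solvable by \cite{Be}, and $F$, being a Frobenius kernel, is nilpotent, so $\tilde G = GF$ is solvable. Next I would verify that $F$ is a Carter subgroup of $\tilde G$. It is nilpotent, and the hypothesis $C_G(F)=1$ forces $N_G(F)=1$: if $g\in G$ satisfies $F^g=F$ in $\tilde G$, then for each $f\in F$ one computes $g^{-1}fg=(g^{-1}\cdot{}^fg)f$, which lies in $F$ only if ${}^fg=g$; requiring this for all $f$ gives $g\in C_G(F)=1$. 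Since $\tilde G=GF$ and $F\le N_{\tilde G}(F)$, Dedekind's law then gives $N_{\tilde G}(F)=F$, so $F$ is self-normalizing and nilpotent, hence a Carter subgroup. As $H$ acts on $\tilde G$ preserving $G$ and $F$, as $F$ is $H$-invariant, and as $FH$ is Frobenius with kernel $F$ by hypothesis, all the hypotheses of Theorem 1.1 and Corollary 1.2 hold for $\tilde G,H,F$.

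The second step provides two translation facts. Because $H$ acts Frobeniusly on $F$ we have $C_F(H)=1$; applying the $H$-equivariant projection $\tilde G\to \tilde G/G\cong F$ to $C_{\tilde G}(H)$ sends it into $C_F(H)=1$, so $C_{\tilde G}(H)\le G$ and therefore $C_{\tilde G}(H)=C_G(H)$. Independently, whenever $G\trianglelefteq \tilde G$ are solvable one has $F_n(\tilde G)\cap G=F_n(G)$ for all $n$: the inclusion $F_n(G)\le F_n(\tilde G)$ holds because $F_n(G)$ is a normal subgroup of $\tilde G$ of Fitting height at most $n$, and the reverse because $F_n(\tilde G)\cap G$ is a normal subgroup of $G$ of Fitting height at most $n$, using the standard fact that a normal subgroup of Fitting height at most $n$ lies inside the $n$-th Fitting term. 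The same normality argument gives $O_\pi(\tilde G)\cap G=O_\pi(G)$ and, inductively, $O_{\pi_1,\dots,\pi_k}(\tilde G)\cap G=O_{\pi_1,\dots,\pi_k}(G)$.

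With these in hand, parts $(a)$, $(c)$, $(d)$ drop out by applying Theorem 1.1$(a)$ and Corollary 1.2 to $\tilde G$ and intersecting with $C_G(H)\le G$; for example $F_n(C_G(H))=F_n(C_{\tilde G}(H))=F_n(\tilde G)\cap C_{\tilde G}(H)=F_n(\tilde G)\cap C_G(H)=F_n(G)\cap C_G(H)$, and identically for $O_\pi$ and the iterated $\pi$-series.

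The essential content is in $(b)$ and $(e)$, since Theorem 1.1$(b)$ and Corollary 1.2$(c)$ only yield the estimates with a possible surplus unit, whereas here equality is claimed. One inequality is free from subgroup monotonicity: $h(C_G(H))\le h(G)$ and $\ell_\pi(C_G(H))\le \ell_\pi(G)$. For the reverse, set $m=h(C_G(H))=h(C_{\tilde G}(H))$ and use the structural assertion of Theorem 1.1$(b)$, namely $\tilde G=F\,F_m(\tilde G)$. Passing to $\overline{\tilde G}=\tilde G/F_m(\tilde G)$, this says $\overline{\tilde G}$ is the nilpotent image of $F$, while $\overline G=GF_m(\tilde G)/F_m(\tilde G)\cong G/F_m(G)$ is a subgroup of it; hence $|G/F_m(G)|$ divides both $|G|$ and $|F|$. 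The hard part, and the main obstacle, is precisely to see that these orders are coprime. This is the classical fact that under $C_G(F)=1$ with $FH$ Frobenius the action is coprime, $(|G|,|F|)=1$; granting it, $\overline G=1$, so $G=F_m(G)$ and $h(G)\le m$, giving $h(G)=h(C_G(H))$. The identical scheme applied to Corollary 1.2$(c)$ (or deduced from the already-established equality $(d)$), with the same coprimality removing the surplus unit, yields $\ell_\pi(G)=\ell_\pi(C_G(H))$.
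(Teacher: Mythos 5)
Your reduction to the semidirect product $\tilde G=GF$ is exactly the route the paper takes (its entire proof is the one sentence following the corollary), and your verifications that $F$ is a self-normalizing nilpotent, hence Carter, subgroup of $\tilde G$, that $C_{\tilde G}(H)=C_G(H)$, and that $F_n(\tilde G)\cap G=F_n(G)$ and $O_\pi(\tilde G)\cap G=O_\pi(G)$ are all correct; so parts $(a)$, $(c)$, $(d)$ are fine. The problem is in $(b)$ and $(e)$, precisely at the step you yourself flag as the main obstacle: the assertion that $C_G(F)=1$ with $F$ a Frobenius kernel forces $(|G|,|F|)=1$ is not a classical fact, and it is false in general. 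The classical coprimality statement holds for a single fixed-point-free automorphism (cyclic $F$), and for $F$ a $p$-group (a $p$-group acting on a nontrivial $p$-group always has nontrivial fixed points), but for a general nilpotent $F=\prod_p F_p$ the condition $C_G(F)=\bigcap_p C_G(F_p)=1$ only forces $C_G(F_{p'})=1$ for each prime $p$ dividing $|G|$; the Sylow $p$-subgroup $F_p$ of $F$ is left completely unconstrained and may share the prime $p$ with $G$. (For instance $F=C_7\times C_3$ can act on a $7$-group $Q$ with $C_Q(F)=1$ via a fixed-point-free action of the $C_3$ factor alone.) Indeed, the absence of any coprimality between $|G|$ and $|F|$ is the entire point of Khukhro's theorem and of this paper — see the remark after Corollary 2.2 emphasizing that no coprimeness condition is assumed. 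So your argument that $|G/F_m(G)|$ divides both $|G|$ and $|F|$ is correct but does not yield $G=F_m(G)$.

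The gap can be closed without coprimality, and this is presumably what the authors mean by ``follows immediately.'' From part $(a)$ with $n=m=h(C_G(H))$ you get $C_G(H)=F_m(G)\cap C_G(H)$, i.e. $C_G(H)\leq F_m(G)$, whence $C_{G/F_m(G)}(H)=1$ (Proposition 3.1 applied in $\tilde G$). Khukhro's Lemma 1.3 — the same ingredient the paper uses in its proof of Corollary 1.2$(c)$ — then gives $[G,F]\leq F_m(G)$, so the image of $G$ in $\tilde G/F_m(G)$ normalizes the image of $F$; that image is a Carter subgroup of $\tilde G/F_m(G)$, hence self-normalizing, so $G\leq FF_m(G)$, and since $G\cap FF_m(G)=F_m(G)(G\cap F)=F_m(G)$ you conclude $G=F_m(G)$ and $h(G)\leq m$. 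The identical argument with $M=O_{\pi',\pi,\dots,\pi,\pi'}(G)$ in place of $F_m(G)$, using part $(d)$ to get $C_G(H)\leq M$ and then $G\cap FM=M$, gives $\ell_\pi(G)\leq\ell_\pi(C_G(H))$. Note that this extra input (Lemma 1.3 of \cite{Khu}) genuinely uses the hypothesis $C_G(F)=1$ and is not derivable from Theorem 1.1 and Corollary 1.2 treated as black boxes, which only give the bounds with the surplus unit.
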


This can be proven by regarding the Frobenius kernel $F$ as an $H$-invariant
Carter subgroup of the semidirect product $G_{1}=GF$ and $H$ as a group of
automorphisms of the group $G_{1}.$ Then the result follows immediately.

So far we have obtained several extensions of Khukhro's result replacing $FH$
by a Frobenius-like group with kernel $F$ under some mild additional
conditions (see \cite{EG1},\cite{EG2},\cite{EG3},\cite{EG4},\cite{EG5}%
,\cite{EG6},\cite{EG7}). Unfortunately, this time we must be satisfied with
the present form due to the example given in \cite{EG2}. Namely, there exists
a group $G$ admitting a group $H$ of automorphisms of prime order such that
$G=VQF$ where

\noindent$(i)$ $V=F(G)$ is an elementary abelian $p$-group for some prime $p$,
$Q$ is a $q$-group for some prime $q$ with $Q\unlhd QF$;\newline$(ii)$ $FH$ is
a Frobenius-like (but not Frobenius) group where $F$ is an extraspecial
$r$-group for some prime $r\notin\{p,q\}$, and $H$ centralizes $Z(F)$%
;\newline$(iii)$ $C_{Q}(F)=1, C_{V}(F)\ne1,$ but $C_{V}(F)H$ is a Frobenius
group;\newline$(iv)$ $C=C_{V}(F)\times F$ is a Carter subgroup of $G$ and $CH$
is a Frobenius-like group;\newline$(v)$ $Ker(C_{Q}(H) \,{on}\, C_{V}(H))\ne
Ker(C_{Q}(H) \,{on}\, V)=1$ and hence \newline$(vi)$ $F(C_{G}(H))\ne F(G)\cap
C_{G}(H).$

All groups are finite throughout the paper. The notation and the terminology
are standard as in \cite{Isaacs} except the following: The Fitting height and
the $\pi$-length of a group $G$ are denoted by $h(G)$ and $\ell_{\pi}(G)$, respectively.

\section{KEY PROPOSITION}

In this section we present the following result which makes the appearance of
the main result of this paper auxiliary. It should be pointed out that it is
of independent interest, too.

\begin{proposition}
Let a group $H$ act on the solvable group $G$ and let $F$ be an $H$-invariant
Carter subgroup of $G$ such that $FH$ is a Frobenius group with kernel $F$ and
complement $H$. Let $Q$ be a $q$-subgroup of $G$ such that $Q\unlhd GH$, and
$V$ be a $kGH$-module over a field of characteristic $p$ for distinct primes
$p$ and $q$ on which $Q$ acts nontrivially. Suppose that one of the following
holds:

\noindent (i) $[C_{V}(F),h]=C_{V}(F)$ for any nonidentity $h\in H;$

\noindent (ii) $chark$ is coprime to $|H|$.

\noindent Then we have $Ker(C_{Q}(H)\,{on}\,C_{V}(H))=Ker(C_{Q}(H)\,{on}\,V).$
\end{proposition}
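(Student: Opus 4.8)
The inclusion $\mathrm{Ker}(C_Q(H)\,\mathrm{on}\,V)\subseteq\mathrm{Ker}(C_Q(H)\,\mathrm{on}\,C_V(H))$ is immediate since $C_V(H)\le V$, so the whole content is the reverse inclusion. Writing $C=C_Q(H)$ and $\rho$ for the action of $GH$ on $V$, the plan is to prove the pointwise statement: if $c\in C$ acts trivially on $C_V(H)$, then $c$ acts trivially on $V$. I would first record the two structural facts that drive everything. Because $q\ne p$, the action of the normal $q$-group $Q$ on $V$ is coprime, so $V|_Q$ is completely reducible; decompose it into its $Q$-homogeneous (isotypic) components $V=\bigoplus_i V_i$. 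Since $Q\unlhd GH$, the group $GH$ --- and in particular both $F$ and $H$ --- permutes the set $\{V_i\}$, while $Q$ itself stabilises each $V_i$. Moreover, by definition $C$ centralises $H$, so the operators of $C$ and of $H$ on $V$ commute; hence $C$ stabilises every $V_i$ and also preserves $C_V(H)$. Note that $c$ is trivial on $V$ iff it is trivial on each $V_i$, and that we may factor out $\mathrm{Ker}(C\,\mathrm{on}\,V)$ at the outset.

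The easy half is the components lying in regular $H$-orbits. If the $H$-orbit of $V_i$ is regular, set $U=\bigoplus_{h\in H}\rho(h)V_i$; then $C_U(H)\le C_V(H)$ and, because $c$ commutes with $H$ and $H$ permutes the summands freely, the fixed space $C_U(H)$ is $\langle c\rangle$-isomorphic to $V_i$ (under (ii) this is transparent via the averaging idempotent $\tfrac{1}{|H|}\sum_{h\in H}h$, which projects $V$ onto $C_V(H)$ and intertwines the $c$-action). Consequently $c$ trivial on $C_V(H)$ forces $c$ trivial on $V_i$. The same computation applies verbatim to each conjugate complement $H^{f^{-1}}$ $(f\in F)$ together with the conjugate element $c^{f^{-1}}\in C_Q(H^{f^{-1}})$, which is still trivial on $C_V(H^{f^{-1}})=\rho(f)C_V(H)$.

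The real work is the components $V_i$ whose $H$-stabiliser $H_i$ is nontrivial; equivalently the part of $V$ that feeds into $C_V(F)$. Here I would exploit the genuine Frobenius structure: the complements of $FH$ are malnormal, so a nonidentity subgroup $H_i\le H$ lies in a unique complement and $N_F(H_i)=1$, which makes the $F$-orbit of such a component long and ties the $H$-stabilised components to $C_V(F)$ and its $F$-conjugates. The aim is to show that on this part the hypothesis still detects $c$: under (ii) one uses that $p\nmid|H|$ to keep the fixed-point functor exact and to split $V_i$ as an $H_i$-module, while under (i) one uses that $H$ acts fixed-point-freely on $C_V(F)$ (that is $[C_V(F),h]=C_V(F)$ for $h\ne 1$) to rule out the only obstructing configuration. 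The self-normalising nilpotent (Carter) property of $F$ enters precisely at this stage: it is what guarantees the correct orbit and stabiliser structure of $F$ on $\{V_i\}$, so that $C_V(H)$, spread out by $F$, still spans each isotypic component.

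The main obstacle is exactly this last case. For a $Q$-homogeneous component $V_i=S\otimes M$, with $S$ irreducible over $Q$ and $c$ acting as $\rho_S(c)\otimes 1_M$, triviality of $c$ on $C_V(H)$ must be upgraded to $\rho_S(c)=1$, and this requires knowing that the $H$-fixed vectors, transported around by the Frobenius kernel $F$, project onto all of $S$. This is the step that genuinely fails for a merely Frobenius-like $FH$ (compare the example of \cite{EG2}, where $F(C_G(H))\ne F(G)\cap C_G(H)$), so conditions (i) and (ii) cannot be dropped; verifying that one of them suffices to force this projection to be surjective is the crux of the argument.
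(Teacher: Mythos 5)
Your setup is sound and in fact tracks the paper's own reductions quite closely: the Clifford decomposition of $V|_Q$ into homogeneous components permuted by $FH$, the observation that $C_Q(H)$ preserves each component and $C_V(H)$, the disposal of components in regular $H$-orbits via the transversal/averaging map (the paper does exactly this in its step (5), using $C_X(H)=\{\sum_{y\in H}v^y : v\in U\}$), and the identification of the real difficulty as the components with nontrivial $H$-stabiliser. But the proposal stops precisely there: you write that "verifying that one of them suffices to force this projection to be surjective is the crux of the argument" and do not carry it out. That is a genuine gap, not a routine verification --- it is the entire content of the proposition, and the example from \cite{EG2} that you yourself cite shows the conclusion can fail without the Frobenius hypothesis, so no soft argument will close it.

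For comparison, here is how the paper actually closes the gap, and it is not by showing that the $H$-fixed vectors span the isotypic component. After reducing to $V$ irreducible with $G=QF$ faithful, one takes $c$ of order $q$ in $K\cap Z(C_Q(H))$ and reduces to $Q=\langle c^F\rangle$; one then shows $F$ is transitive on the set $\Omega$ of homogeneous components and that there is a \emph{unique} $H$-invariant component $W$ (with stabiliser $F_1H$ in $FH$), all other $H$-orbits being regular, so that $c^x\in C_Q(W)$ for every $x\in F\setminus F_1$ and hence $Q=\langle c^{F_1}\rangle C_Q(W)$. The decisive trick is then a commutator computation: since $Z(Q/C_Q(W))$ acts by scalars on $W$, one has $[Z(Q),F_1]\le C_Q(W)$, and $C_{Z(Q)}(F)=1$ (this uses that $F=N_G(F)$ is a Carter subgroup together with the Frobenius action of $H$ on $F$). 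Choosing a weight-$m$ commutator $z=[c^{y_1},\dots,c^{y_m}]\in Z(Q)\setminus C_Q(W)$ with $y_i\in F_1$, where $m$ is the nilpotency class of $Q/C_Q(W)$, and evaluating $\prod_{f\in X}z^f=1$ for $X=O_{q'}(F)$ modulo $C_Q(W)$ yields $z^{|X\cap F_1|}\in C_Q(W)$, forcing $q$ to divide $|X\cap F_1|$ --- a contradiction since $X$ is a $q'$-group. Nothing in your outline anticipates this mechanism (the passage to $Z(Q)$, the nilpotency class, the norm-type product over $O_{q'}(F)$), so the proposal cannot be credited as a proof; it is a correct framing of the problem up to, but not including, the point where the hypotheses (i)/(ii) and the Carter/Frobenius structure are actually used to derive the contradiction.
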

\begin{proof}
Suppose the proposition is false and choose a counterexample with minimum
$dim_{k}V+\left\vert QFH\right\vert $. We split the proof into a sequence of
steps. To simplify the notation we set $K=Ker(C_{Q}(H)\,on\,C_{V}%
(H))$.\newline

\textit{(1) We may assume that $G=QF$. Furthermore $Q\not \leq F$, in
particular $[Q,F]\neq1$, and hence $O_{q^{\prime}}(F)\neq1.$}

\begin{proof}
It can be easily seen by induction that $G=QF.$ We may also assume that
$Q\not \leq F$, in particular $[Q,F]\neq1$, because otherwise $C_{Q}(H)=1$ and
the theorem follows. Note that if $QF$ is a $q$-group then $F$ is properly
contained in $QF$ as $Q\not\leq F.$ But then $F$ is properly contained in ts
normalizer in $QF$, which is not the case.
\end{proof}

\textit{(2) We may assume that $k$ is a splitting field for all subgroups of
$GH$.}

\begin{proof}
We consider the $GH$-module $\bar{V}=V\otimes_{k}{\bar{k}}$ where $\bar{k}$ is
the algebraic closure of $k.$ Notice that $C_{\bar{V}}(H)=C_{V}(H)\otimes
_{k}{\bar{k}}$. Therefore once the proposition has been proven for the group
$GH$ on $\bar{V}$, it becomes true for $GH$ on $V$ also.
\end{proof}

\textit{(3) $V$ is an irreducible $GH$-module on which $G$ acts faithfully.}

\begin{proof}
Let $W=X/Y$ be a $GH$-composition factor of $V$ on which $K$ acts
nontrivially. If $C_{V}(F)=1$ then $C_{Y}(F)=1$ and so $C_{W}(H)=C_{X}(H)Y/Y$
by Theorem 1.5 in \cite{Khu}. Otherwise by hypothesis the group $C_{V}(F)H$ is
Frobenius and hence $chark$ is coprime to $|H|$ which implies that
$C_{W}(H)=C_{X}(H)Y/Y$. Notice that if $W\neq V$ then $Ker(C_{Q}(H)$ on
$C_{W}(H))=Ker(C_{Q}(H)$ on $W)$ holds by induction. Hence $K=Ker(K$ on
$C_{W}(H))=Ker(K$ on $W)$ which contradicts the assumption that $K$ acts
nontrivially on $W.$ Therefore we can regard $V$ as an irreducible $GH$-module.

We set next $\bar{G}=G/Ker(G$ on $V)$ and consider the action of the group
$\bar{G}H$ on $V$. An induction argument gives $Ker(C_{\bar{Q}}(H)$ on
$C_{V}(H))=Ker(C_{\bar{Q}}(H)$ on $V)$ and hence $Ker(\overline{C_{Q}(H)}$ on
$C_{V}(H))=Ker(\overline{C_{Q}(H)}$ on $V)$ which leads to a contradiction.
Thus we may assume that $G$ acts faithfully on $V.$
\end{proof}

\vspace{1mm} \noindent It should be noted that we need only to prove $K=1$ due
to the faithful action of $Q$ on $V$. So we assume this to be false.
\vspace{1mm}

\textit{(4) $L=K\cap Z(C_{Q}(H))\neq1$ since $1\neq K\trianglelefteq C_{Q}%
(H)$. Pick $1\neq c\in L$ of order $q$. Then $Q=\left\langle c^{F}%
\right\rangle $.}

\begin{proof}
Suppose first that $Q\neq\left\langle c^{F}\right\rangle $. An induction
argument applied to the action of $\left\langle c^{F}\right\rangle FH$ on $V$
we get $c\in Ker(C_{\left\langle c^{F}\right\rangle }(H)\,on\,C_{V}%
(H))=Ker(C_{\left\langle c^{F}\right\rangle }(H)\,on\,V)=1.$ This
contradiction gives $Q=\left\langle c^{F}\right\rangle $.
\end{proof}

\vspace{1mm} By Clifford's theorem the restriction of the $GH$-module $V$ to
the normal subgroup $Q$ is a direct sum of $Q$-homogeneous components. Let
$\Omega$ denote the set of all $Q$-homogeneous components of $V$. \vspace{1mm}

\textit{(5) $K$ acts trivially on the sum of components in any regular
$H$-orbit in $\Omega$. Therefore there exists $W\in\Omega$ such that
$Stab_{H}(W)\neq1$.}

\begin{proof}
Let $U$ be an element in $\Omega$ such that $\{U^{y}:y\in H\}$ is a regular
$H$-orbit in $\Omega$ and let $X$ be the sum of components. Then $K$ acts
trivially on $C_{X}(H)=\{\Sigma_{y\in H}v^{y}:v\in U\}$ and hence trivially on
$X$.
\end{proof}

\textit{ (6) $F$ acts transitively on $\Omega$ and $H$ fixes an element, say
$W$, of $\Omega$ and $H$ acts regularly and $K$ acts trivially on the set
$\Omega\setminus\{W\}.$}
\begin{proof}
By (5) there exists $W\in\Omega$ such that $Stab_{H}(W)\neq1$. Let $\Omega
_{1}$ be the $F$-orbit on $\Omega$ containing $W$. Clearly, we have
$Stab_{H}(W)\leq H_{1}=Stab_{H}(\Omega_{1}).$ So $H_{1}\neq1$.

The group $H$ acts transitively on $\left\{  \Omega_{i}:i=1,2,\ldots
,s\right\}  ,$ the collection of $F$-orbits on $\Omega$. Let now
$V_{i}=\bigoplus_{W\in\Omega_{i}}W$ for $i=1,2,\ldots,s.$ Suppose that $s>1.$
Then $H_{1}$ is a proper subgroup of $H.$ Applying induction to the action of
$GH_{1}$ on $V_{1}$ we obtain

$Ker(C_{Q}(H_{1})$ on $C_{V_{1}}(H_{1}))=Ker(C_{Q}(H_{1})$ on $V_{1})$.

It follows that $Ker(C_{Q}(H)$ on $C_{V_{1}}(H_{1}))=Ker(C_{Q}(H)$ on $V_{1})$
holds as $C_{Q}(H)\leq C_{Q}(H_{1}).$ On the other hand we have $C_{V}%
(H)=\left\{  u^{x_{1}}+u^{x_{2}}+\cdots+u^{x_{s}}:u\in C_{V_{1}}%
(H_{1})\right\}  $ where $x_{1},\ldots,x_{s}$ is a complete set of right coset
representatives of $H_{1}$ in $H$. By definition $K$ acts trivially on
$C_{V}(H)$ and normalizes each $V_{i}$. Then $K$ is trivial on $C_{V_{1}%
}(H_{1})$ and hence on $V_{1}.$ As $K$ is normalized by $H$ we see that $K$
acts trivially on each $V_{i}$ and hence on $V.$ This contradiction shows that
$F$ acts transitively on $W$ and hence $H_{1}=H.$

Let now $S=Stab_{FH}(W)$ and $F_{1}=F\cap S$. Then $\left\vert F:F_{1}%
\right\vert =\left\vert \Omega\right\vert =\left\vert FH:S\right\vert .$
Notice next that $(\left\vert S:F_{1}\right\vert ,\left\vert F_{1}\right\vert
)=1$ as $(\left\vert F\right\vert ,\left\vert H\right\vert )=1$. Let $S_{1}$
be a complement of $F_{1}$ in $S.$ Then we have $\left\vert F:F_{1}\right\vert
=\left\vert F\right\vert \left\vert H\right\vert /\left\vert F_{1}\right\vert
\left\vert S_{1}\right\vert $ which implies that $\left\vert H\right\vert
=\left\vert S_{1}\right\vert .$ Therefore we may assume that $S=F_{1}H,$ that
is $W$ is $H$-invariant.

Finally let $x\in F$ and $1\neq h\in H$ such that $(W^{x})^{h}=W^{x}$ holds.
Then $[h,x^{-1}]\in F_{1}$ and so $F_{1}x=F_{1}x^{h}=(F_{1}x)^{h}$ implying
the existence of an element $g\in F_{1}x\cap C_{F}(h)$ by Theorem 3.27 in
\cite{Isaacs}. Now the Frobenius action of $H$ on $F$ gives that $x\in F_{1}.$
This means that $W$ is the only element in $\Omega$ which is stabilized by
some nonidentity element of $H$ and hence all the orbits of $H$ on $\Omega$
except $\{W\}$ are regular.
\end{proof}

\textit{(7) $C_{Z(Q)}(F)=1$. }

\begin{proof}
Due to the scalar action of $Z(Q/C_{Q}(W))$ on $W$, we have $[Z(Q),H]\leq
C_{Q}(W)$ and hence $[C_{Z(Q)}(F),H]\leq C_{Q}(W)$. As $[C_{Z(Q)}(F),H]$ is
centralized by $F$, $[C_{Z(Q)}(F),H]\leq\bigcap_{f\in F}C_{Q}(W^{f}%
)=C_{Q}(V)=1.$ It follows that $C_{Z(Q)}(F)\leq N_{G}(F)\cap C_{Q}(H)\leq
C_{F}(H)=1.$
\end{proof}

\textit{(8) Final Contradiction.}

\begin{proof}
By (4), $Q=\left\langle c^{F}\right\rangle $. It follows by (6) that for any
$x\in F\setminus F_{1},$ $c^{x}$ centralizes $W$. Thus we have $Q=\left\langle
c^{F_{1}}\right\rangle C_{Q}(W)$.

Let $m$ be the nilpotency class of $Q/C_{Q}(W)$ which is of course equal to
the nilpotency class of $Q/C_{Q}(W^{f})$ for any $f\in F.$ As $Q=Q/\cap_{f\in
F}C_{Q}(W^{f})$ is isomorphic to a subgroup of $\oplus_{f\in F}Q/C_{Q}(W^{f})$
we see that $m$ is equal to the nilpotency class of $Q$. Then there exists
$z=[c^{y_{1}},\ldots,c^{y_{m}}]$ where $y_{i}\in F_{1},i=1,\ldots,m$ such that
$z\notin C_{Q}(W)$. Clearly $z\in Z(Q).$ Notice that for all $x\in F\setminus
F_{1},$ we have $z^{x}=[c^{y_{1}x},\ldots,c^{y_{m}x}]\in C_{Q}(W).$
Furthermore we have $[Z(Q),F_{1}]\leq C_{Q}(W)$ due to the scalar action of
$Z(Q/C_{Q}(W))$ on $W$. Let $X=O_{q^{\prime}}(F).$ By (7) $C_{Z(Q)}(X)=1$.
Then,
\[
1=\prod_{f\in X}z^{f}=(\prod_{f\in X\setminus F_{1}}z^{f})(\prod_{f\in X\cap
F_{1}}z^{f})\in(\prod_{f\in X\cap F_{1}}z^{f})C_{Q}(W)=z^{\left\vert X\cap
F_{1}\right\vert }C_{Q}(W).
\]
implying that $q$ divides $\left\vert X\cap F_{1}\right\vert $, which is
impossible. This completes the proof.
\end{proof}

The following is an important consequence of the above proposition and appears
as another version of Proposition 4.1 in \cite{EG} showing that the condition
that $C_{V}(F)=1$ can be replaced by the condition that $C_{Q}(F)=1$ without
assuming any coprimeness condition.

\begin{corollary}
Let $FH$ be a Frobenius group with kernel $F$ and complement $H$ acting on a
$q$-group $Q$ for some prime $q$. Let $V$ be a $kQFH$-module over a field of
characteristic is coprime to $|QH|$. If $C_{Q}(F)=1$ then we have
\[
Ker(C_{Q}(H)\,{on}\,C_{V}(H))=Ker(C_{Q}(H)\,{on}\,V).
\]

\end{corollary}

\begin{proof}
As $C_{Q}(F)=1$, we can regard $F$ as an $H$-invariant Carter subgroup ofthe
semidirect product $QF$. Then we appeal to the above proposition by letting
$G=QF.$
\end{proof}

\bigskip

\section{PROOFS\ OF\ THE\ MAIN\ RESULTS}

The following proposition will be needed in proving Theorem 1.1 and Corollary 1.2.

\begin{proposition}
Let $G$ be a group and $H$ be a subgroup of $Aut(G)$. Suppose that there
exists an $H$-invariant Carter subgroup $F$ of $G$ such that the semidirect
product $FH$ is a Frobenius group with kernel $F$. For any $H$-invariant
solvable normal subgroup $N$ of $G$ we have $C_{G/N}(H)=C_{G}(H)N/N.$
\end{proposition}

\begin{proof}
We proceed by induction on the order of $G.$ As $N$ is solvable there exists a
prime $p$ such that $O_{p}(N)\neq1.$ Set $C_{O_{p}(N)}(F)=1=G/O_{p}(N).$ We
first observe that $C_{\overline{G}}(H)=\overline{C_{G}(H)}:$ This follows
from Theorem 1.5 in \cite{Khu} in case $C_{O_{p}(N)}(F)=1.$ Otherwise $1\neq
C_{O_{p}(N)}(F)\leq F$ which implies that $p$ divides $|F|.$ Then $p$ is
coprime to $|H|$ and the claim follows.

As $\overline{F}$ is an $H$-invariant Carter subgroup of $\overline{G}$, an
induction argument gives that $C_{\overline{G}/\overline{N}}(H)=C_{\overline
{G}}(H)\overline{N}/\overline{N}.$ Notice that
\[
C_{G/N}(H)\cong C_{\overline{G}/\overline{N}}(H)=C_{\overline{G}}%
(H)\overline{N}/\overline{N}\cong C_{G}(H)N/N
\]
which proves the claim.
\end{proof}

\textit{\textbf{Proof of Theorem 1.1}} \thinspace(a) To prove the result for
$n=1$ we use induction on the order of $G.$ Set $\overline{G}=G/F(G)$. Then
$\overline{F}$ is an $H$-invariant Carter subgroup of $\overline{G}$ such that
the semidirect product $\overline{F}H$ is a Frobenius group with kernel
$\overline{F}$. Then the result holds for $\overline{G}$ by induction, that is
$F(C_{\overline{G}}(H))=F(\overline{G})$. In particular $\overline
{F(C_{G}(H))}\leq F(C_{\overline{G}}(H))=F(\overline{G})$ by Proposition 3.1,
implying that $F(C_{G}(H))\leq F_{2}(G).$

Notice that if $F(C_{G}(H))$ is not contained in $F(G)$ then there exists a
prime $q$ such that $Q_{0}=O_{q}(C_{G}(H))$ is not contained in $F(G)$. Let
$Q\in Syl_{q}(F_{2}(G))$ such that $Q_{0}\leq Q.$ So there exists a prime
$p\neq q$ such that $[V,Q_{0}]\neq1$ where $V=O_{p}(G)/\Phi(O_{p}(G))$. We
have $C_{V}(H)=C_{O_{p}(F(G))}(H)\Phi(O_{p}(G))/\Phi(O_{p}(G))$ by Proposition
3.1. Now by Proposition 2.1 applied to the action of $GH$ on $V$ we get
\[
Ker(C_{Q}(H)\,{on}\,C_{V}(H))=Ker(C_{Q}(H)\,{on}\,V).
\]
and hence $Q_{0}$ centralizes $V,$ which is a contradiction. So we have the
result for $n=1$.

Suppose that the result is true for a fixed but arbitrary $k$, that is,
$F_{k}(C_{G}(H))=F_{k}(G)\cap C_{G}(H).$ Set $\overline{G}=G/F_{k}(G).$ Now
using Proposition 3.1 and the induction assumption we get
\[
F_{k+1}(C_{G}(H))/F_{k}(C_{G}(H))\cong F(C_{\overline{G}}(H))=C_{F(\overline
{G})}(H)=C_{\overline{F_{k+1}(G)}}(H)=
\]%
\[
\overline{C_{F_{k+1}(G)}(H)}\cong C_{F_{k+1}(G)}(H)/C_{F_{k}(G)}%
(H)=C_{F_{k+1}(G)}(H)/F_{k}(C_{G}(H)
\]
and the result follows for $k+1.$ This completes the proof of part (a).

(b) We proceed by induction on the order of $G$. Set $\overline{G}=G/F(G)$.
Then $\overline{F}$ is an $H$-invariant Carter subgroup of $\overline{G}$ such
that the semidirect product $\overline{F}H$ is a Frobenius group with kernel
$\overline{F}$. Therefore the theorem is true for $\overline{G}$ by induction,
that is,
\[
h(G)-1=h(\overline{G})\leq h(C_{\overline{G}}(H))+1=h(C_{G}(H)/C_{F(G)}(H))+1.
\]
By (a) we have $C_{F(G)}(H)=F(C_{G}(H)).$ Then $h(G)-1=h(\overline{G})\leq
h(C_{G}(H))$ and so $h(G)\leq h(C_{G}(H))+1$, as desired. Let $m=h(C_{G}(H)).$
Then $G/F_{m}(G)$ is nilpotent and hence is covered by the homomorphic image
of $F$ as the Carter subgroup of a nilpotent group is not proper. This shows
that $G=F_{m}(G)F.$ $\Box$ 
\vspace{4mm}

\textit{\textbf{Proof of Corollary 1.2}} (a)\thinspace\ Clearly, we have
$O_{\pi}(G)\cap C_{G}(H)\leq O_{\pi}(C_{G}(H)).$ Suppose that $O_{\pi}%
(C_{G}(H))\neq1$ . Then there exists a prime $p\in\pi$ such that $O_{p}%
(C_{G}(H))\neq1.$ As $O_{p}(C_{G}(H))\leq F(C_{G}(H))\leq F(G)$ by Theorem
1.1, we see that $O_{p}(G)\neq1.$ Set $\overline{G}=G/O_{p}(G).$ Since
$|\overline{G}|\leq|G|$ we get by induction that $O_{\pi}(C_{\overline{G}%
}(H))\leq O_{\pi}(\overline{G})\leq\overline{O_{\pi}(C_{G}(H))}.$ As
$C_{\overline{G}}(H)=\overline{C_{G}(H)}$ by Proposition 3.1 and $O_{\pi
}(C_{\overline{G}}(H))=O_{\pi}(\overline{C_{G}(H)})$ we obtain $O_{\pi}%
(C_{G}(H))\leq O_{\pi}(G)\cap C_{G}(H)$, and the claim follows. Now part (b)
follows immediately from (a) by induction.

To prove part (c) we proceed by induction on the order of $G$. Let $\ell_{\pi
}(C_{G}(H))=m$ and $M=O_{{\pi}^{\prime},\pi,{\pi}^{\prime},\ldots,\pi,{\pi
}^{\prime}}(G)$ where the number of $\pi$' s is equal to $m.$ By (b) we have
$M\cap C_{G}(H)=O_{{\pi}^{\prime},\pi,{\pi}^{\prime},\ldots,\pi,{\pi}^{\prime
}}(C_{G}(H))=C_{G}(H)$ and hence $C_{G}(H)\leq M$ due to coprimeness. That is
$C_{G/M}(H)=1$. By Lemma 1.3 in \cite{Khu} it follows that $[G,F]\leq M$ and
so $G/M\leq N_{G/M}(FM/M)=FM/M.$ That is $G/M$ is nilpotent and hence is a
$\pi$-group. This shows that $\ell_{\pi}(G)\leq m+1$ as claimed. $\Box$

\end{proof}

\end{document}